\newtheorem{theorem}{\sc Theorem}[section]
\newtheorem{lemma}[theorem]{\sc Lemma}
\newtheorem{proposition}[theorem]{\sc Proposition}
\begin{document}

\keywords{Profinite groups, Automorphisms, Centralizers}
\subjclass[2010]{20E18, 20E25, 20F40}

\thanks{This work was supported by the Conselho Nacional de Desenvolvimento Cient\'{\i}fico e Tecnol\'ogico (CNPq), Brazil. }

\title[Profinite groups and coprime automorphisms]{Profinite groups and the fixed points\\ of coprime automorphisms}
\author{Cristina Acciarri} 
\address{Cristina Acciarri:  Department of Mathematics, University of Brasilia,
Brasilia-DF, 70910-900 Brazil}
\email{acciarricristina@yahoo.it}

\author{Pavel Shumyatsky} 

\address{Pavel Shumyatsky: Department of Mathematics, University of Brasilia,
Brasilia-DF, 70910-900 Brazil}

\email{pavel@unb.br}

\begin{abstract} The main result of the paper is the following theorem. Let $q$ be a prime and $A$ an elementary abelian group of order $q^3$. Suppose that $A$ acts coprimely on a profinite group $G$ and assume that $C_G(a)$ is locally nilpotent for each $a\in A^{\#}$. Then the group $G$ is locally nilpotent.
\end{abstract}

\maketitle

\section{Introduction}

Let $A$ be a  finite group acting on a finite group $G$. Many well-known results show that the structure of the centralizer $C_G(A)$ (the fixed-point subgroup) of $A$ has influence over the structure of $G$. The influence is especially strong if $(|A|,|G|)=1$, that is, the action of $A$ on $G$ is coprime. Let $A^\#$ denote the set of non-identity elements of A. The following theorem was proved in \cite{ward}.

\begin{theorem}\label{wa} Let $q$ be a prime and $A$ an elementary abelian $q$-group of order at least $q^3$. Suppose that $A$ acts coprimely on a finite group $G$ and assume that $C_{G}(a)$ is nilpotent for each $a\in A^{\#}$. Then $G$ is nilpotent.
\end{theorem}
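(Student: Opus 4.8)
The plan is to argue directly, with no passage to a minimal counterexample, by combining the standard coprime-action toolkit with a combinatorial feature of rank-three modules. Since $A$ is a $q$-group acting coprimely, we have $q\nmid|G|$, so every prime $p$ dividing $|G|$ is distinct from $q$. For each such $p$ I would fix an $A$-invariant Sylow $p$-subgroup $P_p$; these exist by coprime action, and $A$ acts coprimely on each $P_p$. The only structural input I need about this action is the generation lemma for coprime abelian actions: if $A$ is a noncyclic abelian group acting coprimely on a group $H$, then $H=\langle C_H(B): B\le A,\ A/B\ \text{cyclic}\rangle$. For $A$ elementary abelian of rank $r\ge 3$ the subgroups $B$ with $A/B$ cyclic are exactly the hyperplanes (together with $A$ itself, whose centralizer is absorbed), so $P_p=\langle C_{P_p}(B): B\ \text{a hyperplane of}\ A\rangle$, and likewise for every prime.

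The heart of the argument is to show that $A$-invariant Sylow subgroups for distinct primes centralize one another. Fix $p\ne t$ and consider $C_{P_p}(B)$ and $C_{P_t}(B')$ for two hyperplanes $B,B'$ of $A$, viewed as $\mathbb{F}_q$-subspaces. Because $\dim B+\dim B'=2(r-1)\ge r+1$ when $r\ge 3$, the sum of the dimensions exceeds $\dim A=r$, so the intersection $B\cap B'$ is nontrivial; choose $a\in(B\cap B')^{\#}$. Then $C_{P_p}(B)\le C_{P_p}(a)\le C_G(a)$ and $C_{P_t}(B')\le C_G(a)$, so both lie in the \emph{same} group $C_G(a)$, which is nilpotent by hypothesis. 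Since $C_{P_p}(B)$ is a $p$-group and $C_{P_t}(B')$ is a $t$-group with $p\ne t$, and in a nilpotent group subgroups of coprime order commute elementwise, we obtain $[C_{P_p}(B),C_{P_t}(B')]=1$. Ranging over all pairs of hyperplanes and invoking the generation above yields $[P_p,P_t]=1$.

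It remains to assemble these Sylow subgroups. Listing the primes $p_1,\dots,p_k$ dividing $|G|$ and the chosen $A$-invariant Sylow subgroups $P_1,\dots,P_k$, the pairwise commutativity makes $N=P_1\cdots P_k$ a subgroup which, by coprimality of the orders, is the internal direct product $P_1\times\cdots\times P_k$; hence $|N|=\prod_i|P_i|=|G|$ and $N=G$. Therefore $G$ is the direct product of its Sylow subgroups and so is nilpotent, as required. The step I would expect to cause trouble in a problem of this type---ruling out nonsolvable sections, which normally forces an appeal to the classification of finite simple groups---is here bypassed entirely, since the Sylow argument never needs solvability. The genuinely essential hypothesis is the rank bound, and it enters at exactly one point: for $r=2$ two distinct hyperplanes are distinct lines meeting only in the identity, the intersection step collapses, and indeed rank-two counterexamples exist. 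Thus the one conceptual point to get right is the observation that, when $r\ge 3$, the centralizers of any two hyperplanes are simultaneously trapped inside a single nilpotent fixed-point subgroup $C_G(a)$.
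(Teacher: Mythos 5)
A preliminary but essential observation: the paper never proves Theorem \ref{wa}. It is Ward's theorem, quoted from \cite{ward} and used as a black box (in the proof of Theorem \ref{main} it is what yields pronilpotency of $G$), so there is no internal proof to compare yours against; your argument must be judged on its own merits, and it stands. The three ingredients are all sound. First, $A$-invariant Sylow $p$-subgroups do exist under coprime action, but note that this is not formally free: it rests on Glauberman's lemma, whose proof uses the conjugacy part of Schur--Zassenhaus and therefore requires $A$ or $G$ to be solvable. Since your $A$ is abelian this costs nothing, so your claim that the argument avoids solvability of $G$ and any appeal to deep classification results is accurate, but you should say explicitly that solvability of $A$ is what makes the invariant-Sylow step legitimate. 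Second, the generation of each $P_p$ by the hyperplane centralizers $C_{P_p}(B)$ is exactly the paper's Lemma \ref{76}, applied to the coprime action of $A$ on the $A$-invariant subgroup $P_p$. Third, the count $\dim B+\dim B'=2(r-1)>r$ for $r\geq 3$ correctly traps $C_{P_p}(B)$ and $C_{P_t}(B')$ inside the single nilpotent group $C_G(a)$ for any $a\in (B\cap B')^{\#}$, where subgroups of coprime order commute elementwise; ranging over all pairs of hyperplanes gives $[P_p,P_t]=1$ by the usual generating-set argument, and the assembly of $G$ as the internal direct product of its Sylow subgroups is routine. You have also correctly isolated where the hypothesis $|A|\geq q^3$ enters: for rank $2$, distinct hyperplanes are lines meeting trivially, consistent with the rank-two counterexample the paper gives in its introduction. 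In short, this is a correct, self-contained, classification-free proof of a result that the paper itself only cites.
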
  

There are well-known examples that show that the above theorem fails if the order of $A$ is $q^2$. Indeed, let $p$ and $r$ be odd primes and $H$ and $K$ the groups of order $p$ and $r$ respectively. Denote by $A=\langle a_1,a_2\rangle$ the noncyclic group of order four with generators $a_1,a_2$ and by $Y$ the semidirect product of $K$ by $A$ such that $a_1$ acts on $K$ trivially and $a_2$ takes every element of $K$ to its inverse. Let $B$ be the base group of the wreath product $H\wr Y$ and note that $[B,a_1]$ is normal in $H\wr Y$. Set $G=[B,a_1]K$. The group $G$ is naturally acted on by $A$ and $C_G(A)=1$. Therefore $C_{G}(a)$ is abelian for each $a\in A^{\#}$. But, of course, $G$ is not nilpotent.

In \cite{eu2001} the situation of Theorem \ref{wa} was studied in greater detail and the following result was obtained.

\begin{theorem}\label{2001} Let $q$ be a prime and $A$ an elementary abelian $q$-group of order at least $q^3$. Suppose that $A$ acts coprimely on a finite group $G$ and assume that $C_{G}(a)$ is nilpotent of class at most $c$ for each $a\in A^{\#}$. Then $G$ is nilpotent and the class of $G$ is bounded by a function depending only on $q$ and $c$.
\end{theorem}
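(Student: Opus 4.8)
The plan is to derive the class bound from Theorem~\ref{wa} by the method of associated Lie rings, reducing it to a statement about graded Lie algebras. First, Theorem~\ref{wa} already gives that $G$ is nilpotent, so $G$ is the direct product of its (necessarily $A$-invariant) Sylow subgroups, and the class of $G$ is the maximum of the classes of these factors. Hence it suffices to bound, uniformly in the prime, the class of an $A$-invariant Sylow $p$-subgroup $P$. Replacing $A$ by a subgroup of order exactly $q^3$ we may assume $|A|=q^3$; the hypothesis is inherited, and since $C_P(a)=C_G(a)\cap P$ is a subgroup of a group of class at most $c$, each $C_P(a)$ has class at most $c$, while $(|A|,|P|)=1$ forces $p\neq q$. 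I would then pass to the associated Lie ring $L=L(P)=\bigoplus_i\gamma_i(P)/\gamma_{i+1}(P)$, whose class equals that of $P$; the action of $A$ induces a coprime action on $L$, and by the standard behaviour of coprime actions on associated Lie rings $C_L(a)$ is isomorphic to $L(C_P(a))$, hence nilpotent of class at most $c$ for every $a\in A^{\#}$.

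Next I would extend the ground ring to a field $k$ of characteristic $p$ containing a primitive $q$-th root of unity, which does not increase the class; over $k$ the elementary abelian $q$-group $A$ acts diagonalizably, and decomposing $\bar L=L\otimes k$ into common eigenspaces yields a grading $\bar L=\bigoplus_{\chi\in V}\bar L_\chi$ by the dual group $V=\widehat A\cong\mathbf{F}_q^{3}$. For $a\in A^{\#}$ the element $a$ acts on $\bar L_\chi$ as the scalar $\chi(a)$, so $C_{\bar L}(a)=\bigoplus_{\chi(a)=1}\bar L_\chi=\bar L_{H_a}$, where $H_a=\{\chi:\chi(a)=1\}$ is a two-dimensional subspace of $V$; as $a$ ranges over $A^{\#}$ these $H_a$ run over all two-dimensional subspaces. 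The problem is thereby reduced to the following purely Lie-theoretic assertion: if a Lie algebra $\bar L=\bigoplus_{\chi\in V}\bar L_\chi$ is graded by $V\cong\mathbf{F}_q^{3}$ and for every two-dimensional subspace $H\le V$ the subalgebra $\bar L_H=\bigoplus_{\chi\in H}\bar L_\chi$ is nilpotent of class at most $c$, then $\bar L$ is nilpotent of $(q,c)$-bounded class.

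To attack this core claim I would work with homogeneous commutators. A commutator of homogeneous elements of degrees $v_1,\dots,v_n$ lies in $\bar L_{v_1+\cdots+v_n}$, and if these degrees span a subspace of dimension at most $2$ they lie in a common $H$, so the commutator vanishes once $n>c$. For each line $\ell\le V$ the subalgebra $M_\ell=\bigoplus_{v\in\ell}\bar L_v$ is contained in $\bar L_H$ for every $H\supseteq\ell$, hence has class at most $c$, and $\bar L=\sum_\ell M_\ell$ is a sum of the $q^2+q+1$ such subalgebras. The decisive feature, and exactly where rank $3$ rather than $2$ is used, is that for two distinct lines $\ell,\ell'$ one has $[M_\ell,M_{\ell'}]\subseteq\bar L_{\ell+\ell'}$, and in dimension $3$ the plane $\ell+\ell'$ is a \emph{proper} subspace, so such a product again lands inside a nilpotent hyperplane subalgebra; in dimension $2$ one would have $\ell+\ell'=V$ and this control disappears, which is precisely why Theorem~\ref{wa} fails for $|A|=q^2$.

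Finally I would run a collection argument on a long left-normed homogeneous commutator: by pigeonhole over the $q^2+q+1$ lines, any sufficiently long commutator has many factors whose degrees lie on a common line, and the Jacobi identity lets one gather these together, the correction terms being—by the previous paragraph—commutators that stay inside nilpotent hyperplane subalgebras and so vanish once long enough. An induction on the number of distinct lines involved (at most $q^2+q+1$) and on the class of the hyperplane subalgebras should then produce a length $N=N(q,c)$ beyond which all homogeneous commutators vanish, giving the class bound for $\bar L$, hence for $P$, and hence for $G$. I expect the main obstacle to be exactly this combinatorial step: organizing the collection process so that each Jacobi correction provably lowers a well-chosen complexity measure while keeping every intermediate commutator inside a nilpotent hyperplane subalgebra, so that the resulting length bound stays a function of $q$ and $c$ alone rather than blowing up.
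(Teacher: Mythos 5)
You should know at the outset that the paper never proves Theorem~\ref{2001}: it is quoted from \cite{eu2001} and used as a black box (twice) in the proof of Theorem~\ref{main}. So your proposal can only be judged against the known strategy for such results; your reduction does follow that strategy (Ward's theorem, reduction to an $A$-invariant Sylow $p$-subgroup $P$, the associated Lie ring, eigenspace decomposition, arriving at the claim about Lie algebras graded by $V\cong\mathbb F_q^3$ in which every hyperplane subalgebra $\bar L_H$ has class at most $c$), but it stops exactly where the theorem begins. That graded claim \emph{is} the theorem, and the one mechanism you offer for it is false: when you transpose adjacent entries $x_i,x_{i+1}$ in a long left-normed commutator, the Jacobi correction term is the full-length commutator in which the two entries are replaced by the single entry $[x_i,x_{i+1}]$; only that one new entry lies in $\bar L_{\ell+\ell'}$, while the remaining entries have degrees that in general span all of $V$, so the correction term is \emph{not} an element of any hyperplane subalgebra and there is no reason for it to ``vanish once long enough.'' What is true, and is a good start, is this: a left-normed commutator $[y_1,\dots,y_m]$ vanishes as soon as $\langle u\rangle+\langle v_j,\dots,v_k\rangle$ is at most $2$-dimensional for some $j\le k$ with $k-j\ge c-1$, where $u$ is the degree of the initial segment $[y_1,\dots,y_{j-1}]$, because then $[\,[y_1,\dots,y_{j-1}],y_j,\dots,y_k]$ is a commutator of weight at least $c+1$ inside some $\bar L_H$. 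But this criterion alone cannot bound the class: already for $c=2$ one can make the partial sums $v_1+\dots+v_i$ cycle through $e_1,e_2,e_3$, so that every $c+1$ consecutive partial sums span $V$ and the criterion never applies, at arbitrary length. So genuine rewriting combinatorics is unavoidable, and that is precisely the step you defer (``I expect the main obstacle to be exactly this combinatorial step''). As it stands, the proposal is a correct reduction plus an unproved core.

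There is also a direction-of-inequality error in your scalar extension. $L=\bigoplus_i\gamma_i(P)/\gamma_{i+1}(P)$ is a Lie ring whose additive group is a finite abelian $p$-group, not an $\mathbb F_p$-algebra; tensoring with a field $k$ of characteristic $p$ factors through $L/pL$ and can strictly \emph{decrease} the class. For example, on the free $\mathbb Z/p^2$-module with basis $x,y,z$ define $[x,y]=pz$ and all other basic brackets $0$: this Lie ring has class $2$, but modulo $p$ it is abelian. Since what you need is $\mathrm{class}(P)=\mathrm{class}(L)\le\mathrm{class}(\bar L)$, bounding the class of $L\otimes k$ proves nothing about $P$. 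The standard repair is to tensor with $\mathbb Z[\omega]$, $\omega$ a complex primitive $q$th root of unity: this is a free $\mathbb Z$-module, so the class is preserved exactly, and the common-eigenspace decomposition is still available because $|A|=q^3$ acts invertibly on the finite $p$-group $L$. Note that you cannot dodge this by switching to the $p$-dimension central series (whose quotients do have exponent $p$): nilpotency of $L_p(G)$ does not by itself bound the class of $G$ --- which is exactly why the present paper, after proving $L_p(G)$ nilpotent in Theorem~\ref{main}, still has to go through Lazard and Lubotzky--Mann and then invoke Theorem~\ref{2001} once more.
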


Of course, the above results have a bearing on profinite groups. By an automorphism of a profinite group we always mean a continuous automorphism. A group $A$ of automorphisms of a profinite group $G$ is coprime if $A$ has finite order while $G$ is an inverse limit of finite groups whose orders are relatively prime to the order of $A$. Using the routine inverse limit argument it is easy to deduce from Theorem \ref{wa} and Theorem \ref{2001} that if $G$ is a profinite group admitting a coprime group of automorphisms $A$ of order $q^3$ such that $C_G(a)$ is pronilpotent for all $a\in A^{\#}$, then $G$ is pronilpotent; and if $C_G(a)$ is nilpotent for all $a\in A^{\#}$, then $G$ is nilpotent. Yet, certain results on fixed points in profinite groups cannot be deduced from corresponding results on finite groups. The purpose of the present paper is to establish the following theorem.

\begin{theorem}\label{main} Let $q$ be a prime and $A$ an elementary abelian $q$-group of order at least $q^3$. Suppose that $A$ acts coprimely on a profinite group $G$ and assume that $C_{G}(a)$ is locally nilpotent for each $a\in A^{\#}$. Then $G$ is locally nilpotent.
\end{theorem}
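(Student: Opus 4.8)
The plan is to first upgrade the hypothesis to pronilpotency of $G$, then pass to a single Sylow pro-$p$ subgroup and attack it through its associated Lie algebra.

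First I would reduce to the pronilpotent case. Let $N$ be an open $A$-invariant normal subgroup of $G$. By coprimeness $C_{G/N}(a)=C_G(a)N/N$ is a finite quotient of the locally nilpotent group $C_G(a)$, hence nilpotent. Thus every finite continuous quotient $G/N$ satisfies the hypothesis of Theorem \ref{wa}, and so $G/N$ is nilpotent. Since this holds for all such $N$, the group $G$ is pronilpotent and decomposes as the Cartesian product $G=\prod_p G_p$ of its Sylow subgroups, each $G_p$ being $A$-invariant with $C_{G_p}(a)=C_G(a)\cap G_p$ locally nilpotent.

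Next I would localize. To prove $G$ locally nilpotent it suffices to show that every topologically finitely generated $A$-invariant closed subgroup $K$ is nilpotent (any finite subset of $G$ lies in such a $K$, obtained by adjoining the finitely many $A$-translates of its members). Being a closed subgroup of the pronilpotent group $G$, such a $K$ is itself pronilpotent, $K=\prod_p K_p$, with each $K_p$ a topologically finitely generated $A$-invariant pro-$p$ group whose subgroups $C_{K_p}(a)$ are locally nilpotent. Hence it is enough to prove that such a pro-$p$ group is nilpotent and, in order to reassemble the product, that its class is bounded independently of $p$. It is exactly here that Theorem \ref{2001} cannot be applied directly: on passing to finite quotients the centralizers are nilpotent but of unbounded class, so the quantitative bound it provides degenerates.

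I would then bring in Lie methods. Associated to a finitely generated pro-$p$ group $P=K_p$ is the graded $\mathbb{F}_p$-Lie algebra $L=L_p(P)=\bigoplus_i D_i/D_{i+1}$ built from the Zassenhaus--Jennings--Lazard series; it is generated by finitely many elements and inherits a coprime action of $A$. Local nilpotency of each $C_P(a)$ translates into the statement that the fixed-point subalgebra $C_L(a)$ consists of ad-nilpotent elements, while coprimeness together with $\mathrm{rank}(A)\ge 2$ yields the additive decomposition $L=\sum_{a\in A^\#}C_L(a)$. The goal is to deduce that $L$ is nilpotent, with class bounded in terms of $q$, the number of generators, and the Engel data only; by the relationship between $L_p(P)$ and the lower central series of $P$ this forces $\gamma_{c+1}(P)=1$ for the appropriate $c$, giving the required prime-independent bound and hence the local nilpotency of $G$. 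This last passage is in spirit the Lie-algebraic incarnation of the Wilson--Zelmanov fact that a profinite Engel group is locally nilpotent.

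The main obstacle is precisely this Lie-algebraic step, and it is where the hypothesis $|A|\ge q^3$ is genuinely needed. Before Zelmanov's theorem (a finitely generated Engel Lie algebra satisfying a polynomial identity is nilpotent) can be invoked, two things must be arranged: that $L$ satisfies a PI, and that \emph{every} element of $L$ — not merely those lying in a single $C_L(a)$ — is ad-nilpotent. For the first I would extract a polynomial identity from the coprime $A$-action together with the smallness of the fixed points. For the second, since a sum of ad-nilpotent elements need not be ad-nilpotent, the decomposition $L=\sum_a C_L(a)$ does not suffice by itself; here rank $3$ enters exactly as in Ward's Theorem \ref{wa}, allowing one to realign an arbitrary element against three independent hyperplanes of $A$ and thereby reduce its ad-nilpotency to that of elements of the individual centralizers. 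Keeping the Engel degrees uniform in $p$, so that Zelmanov's bound is prime-independent, is the delicate quantitative point on which the reassembly of the Cartesian product $\prod_p G_p$ ultimately rests.
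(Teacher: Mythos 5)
Your outline gets the broad architecture right (pronilpotency via Theorem \ref{wa}, reduction to pro-$p$ groups, Zelmanov's Theorem \ref{21} plus a polynomial identity coming from the coprime $A$-action, with $|A|\geq q^3$ entering through the maximal subgroups of $A$), but two load-bearing steps are, respectively, unobtainable and false. First, your reassembly of $K=\prod_p K_p$ needs the nilpotency class of $K_p$ to be bounded independently of $p$, and you propose to get this by ``keeping the Engel degrees uniform in $p$''. But the hypothesis---local nilpotency of the $C_G(a)$---contains no numerical data whatsoever, so there are no uniform Engel degrees to keep: the only way to extract quantitative information is a Baire category argument (Lemma \ref{remlocf}), and that yields an Engel law valid merely on cosets of some open subgroup $H\leq C_G(a)$, with the index of $H$ and the Engel length depending on $a$ and entirely out of your control. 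The paper's solution to exactly this problem is the idea missing from your plan: combining that coset law with the Burns--Medvedev theorem on Engel groups, one shows that outside a \emph{finite} set $\pi$ of primes (those dividing the index of $H$ or the exponent arising from Burns--Medvedev, over the finitely many $a\in A^{\#}$) the centralizers in $O_{\pi'}(G)$ are honestly nilpotent, whence $O_{\pi'}(G)$ is nilpotent by Theorem \ref{2001}; the Lie machinery is then applied only to the finitely many Sylow subgroups for $p\in\pi$, one finitely generated subgroup at a time, where no uniformity in $p$ is required.

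Second, the step ``nilpotency of $L_p(P)$ of class $c$ forces $\gamma_{c+1}(P)=1$'' is false. For the Zassenhaus--Jennings--Lazard series, nilpotency of $L_p(P)$ is, by Lazard's criterion, equivalent to $P$ being $p$-adic analytic, and analytic pro-$p$ groups need not be nilpotent: take $P$ to be the semidirect product of $\mathbb{Z}_p=\overline{\langle a\rangle}$ by $\mathbb{Z}_p=\overline{\langle b\rangle}$ where $b$ acts as multiplication by $1+p$, so that $[a,b]=a^p$; here $L_p(P)$ is nilpotent but $\gamma_i(P)\neq 1$ for all $i$ (the dimension series involves $p$-th powers, so the graded algebra cannot detect relations of this kind). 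The correct continuation, as in the paper, is: $L_p(G)$ nilpotent $\Rightarrow$ $G$ is $p$-adic analytic (Lazard) $\Rightarrow$ $G$ has finite rank (Lubotzky--Mann) $\Rightarrow$ each $C_G(a)$ is topologically finitely generated and hence nilpotent $\Rightarrow$ $G$ is nilpotent by Theorem \ref{2001}. A smaller inaccuracy in the same part of your sketch: Zelmanov's Theorem \ref{21} does not require \emph{every} element of $L$ to be ad-nilpotent, only every commutator in a fixed finite generating set; correspondingly, the paper proves ad-nilpotency not for elements of a single $C_L(a)$ but for elements of $C_L(A_i)$ with $A_i$ a maximal subgroup of $A$ (noncyclic precisely because $|A|\geq q^3$, so that Lemma \ref{76} applies), and then extends scalars to $\mathbb{F}_p[\omega]$ so that $\overline{L}$ is generated by common eigenvectors all of whose commutators land in such centralizers.
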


Recall that a group is locally nilpotent if every finitely generated subgroup is nilpotent. Though Theorem \ref{main} looks similar to Theorems \ref{wa} and \ref{2001}, in fact it cannot be deduced directly from those results. Moreover, the proof of Theorem \ref{main} is very much different from those of Theorems \ref{wa} and \ref{2001}. In particular, unlike the other results,  Theorem \ref{main} relies heavily on the Lie-theoretical techniques created by Zelmanov in his solution of the restricted Burnside problem \cite{ze1,ze2}. The general scheme of the proof of Theorem \ref{main} is similar to that of the result in \cite{khushu}.

\section{Preparatory work}

Throughout the paper we use without special references the well-known properties of coprime actions:  

\begin{lemma}\label{zizi} If a group $A$ acts coprimely on a finite group $G$, then $ C_{G/N}(A)=C_G(A)N/N$ for any $A$-invariant normal subgroup $N$. 
\end{lemma}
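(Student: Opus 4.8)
The inclusion $C_G(A)N/N \subseteq C_{G/N}(A)$ is immediate: if $g\in C_G(A)$ then $g^a=g$ for every $a\in A$, hence $(gN)^a=g^aN=gN$, so $gN$ is fixed by $A$ in $G/N$. The whole content of the lemma therefore lies in the reverse inclusion, which I would reformulate as follows: every coset $gN$ that is fixed setwise by $A$ must contain an element of $C_G(A)$. Indeed, if $\bar g=gN\in C_{G/N}(A)$ then $gN$ is an $A$-invariant subset of $G$, and producing a point $c\in gN$ with $c^a=c$ for all $a$ yields $\bar g=cN\in C_G(A)N/N$.

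To set this up cleanly I would first pass to the full preimage $L$ of $C_{G/N}(A)$ under the projection $G\to G/N$. Being a preimage of a subgroup, $L$ is a subgroup of $G$ containing $N$; it is $A$-invariant (since both $N$ and $C_{G/N}(A)$ are), the action of $A$ on $L$ is again coprime, and $A$ acts trivially on $L/N=C_{G/N}(A)$. Replacing $G$ by $L$, one may therefore assume $C_{G/N}(A)=G/N$, so that the task becomes showing that the natural map $C_G(A)\to G/N$ is onto, i.e.\ that \emph{each} coset of $N$ contains a fixed point.

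The decisive step is the production of a fixed point inside an $A$-invariant coset. My preferred route is Glauberman's lemma: the subgroup $N$ acts (simply) transitively on the coset $gN$ by right multiplication, the group $A$ acts on $gN$, and the two actions are compatible in the sense that $(xn)^a=x^an^a$ for $x\in gN$ and $n\in N$; since $(|A|,|N|)=1$, Glauberman's lemma guarantees that $A$ fixes a point of $gN$, which is exactly what is required. If one prefers a self-contained argument, one can instead induct on $|N|$: given a proper nontrivial $A$-invariant $G$-normal subgroup $N_1<N$, applying the statement inductively to the pairs $(G,N_1)$ and $(G/N_1,N/N_1)$ and composing the two identities yields $C_{G/N}(A)=C_G(A)N/N$. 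This reduces matters to the case where $N$ is a minimal $A$-invariant normal subgroup of $G$; when such an $N$ is elementary abelian, the fixed point is found by observing that $a\mapsto g^{-1}g^a$ is a $1$-cocycle and invoking the vanishing $H^1(A,N)=0$ valid for coprime actions.

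The main obstacle is precisely the non-abelian case of $N$: the cohomological computation only disposes of abelian chief factors, whereas a minimal $A$-invariant normal subgroup may be a direct product of nonabelian simple groups. This is exactly where Glauberman's lemma—whose own proof settles the nonabelian situation by a transitivity and induction argument—carries the real weight, and I would lean on it rather than attempt to handle the nonabelian simple case by hand.
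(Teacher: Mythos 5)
The paper offers no proof of this lemma at all --- it is invoked as one of the ``well-known properties of coprime actions'' used without special reference --- so there is nothing internal to compare against; your argument is the standard literature proof and it is correct. Your reduction to producing an $A$-fixed point in each $A$-invariant coset of $N$ (after passing to the preimage $L$ of $C_{G/N}(A)$), with $N$ acting simply transitively on $gN$ by right multiplication and the compatibility $(xn)^a=x^an^a$, is exactly the classical route via Glauberman's lemma; the only point worth making explicit is that Glauberman's lemma requires one of the two groups to be solvable, which here follows from coprimality of $|A|$ and $|N|$ via Feit--Thompson (and is automatic in this paper's application, where $A$ is elementary abelian, so no deep input is actually needed). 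Your fallback induction on $|N|$ is also sound --- the composition of the two identities for $(G,N_1)$ and $(G/N_1,N/N_1)$ works, the cocycle $a\mapsto g^{-1}g^a$ together with $H^1(A,N)=0$ disposes of the elementary abelian minimal case --- and you are candid and correct that the nonabelian minimal normal subgroup is precisely where Glauberman's transitivity-and-induction argument carries the weight rather than cohomology.
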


\begin{lemma}\label{76} If $A$ is a noncyclic abelian group acting coprimely on a finite group $G$, then $G$ is generated by the subgroups $C_G(B)$, where $A/B$ is cyclic.
\end{lemma}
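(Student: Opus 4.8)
The plan is to prove the statement by reducing first to $p$-groups and then to modules, where the essential content is isolated. Write $D=\langle C_G(B) : A/B \text{ cyclic}\rangle$; this subgroup is $A$-invariant because $A$ is abelian and hence normalizes every $B\le A$ and every $C_G(B)$. The goal is $D=G$, and for this it suffices to show that $D$ contains an $A$-invariant Sylow $p$-subgroup of $G$ for each prime $p\mid |G|$. Indeed, by coprimality such Sylow subgroups exist, and the subgroup they generate contains a full Sylow $p$-subgroup for every $p$, so its order is divisible by the full $p$-part of $|G|$ for each $p$ and it must equal $G$. This Sylow reduction is precisely what lets me sidestep the difficulty that a naive induction on $|G|$ would meet, namely a nonabelian minimal $A$-invariant normal subgroup.

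So fix an $A$-invariant Sylow $p$-subgroup $P$ and show $P\le D$, that is $P=\langle C_P(B): A/B \text{ cyclic}\rangle=:D_P$. First I would pass to the Frattini quotient $V=P/\Phi(P)$, which is $A$-invariant since $\Phi(P)$ is characteristic, and which is an elementary abelian $p$-group, i.e.\ an $\mathbb{F}_p$-vector space carrying a coprime action of $A$. Applying Lemma \ref{zizi} to each $B$ (which also acts coprimely) gives $C_{V}(B)=C_P(B)\Phi(P)/\Phi(P)$, so once I know $V=\langle C_V(B): A/B \text{ cyclic}\rangle$ I obtain $P=D_P\,\Phi(P)$, and since $\Phi(P)$ consists of non-generators this forces $P=D_P\le D$.

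It then remains to treat the module case: $V$ an $\mathbb{F}_p A$-module with $(p,|A|)=1$. Here I would invoke Maschke's theorem to write $V$ as a direct sum of irreducible $\mathbb{F}_p A$-modules and handle each summand $W$ separately. The crucial point is that for irreducible $W$, Schur's lemma makes $E=\operatorname{End}_{\mathbb{F}_p A}(W)$ a division ring, necessarily a finite field by Wedderburn's theorem, so $E^{\times}$ is cyclic; since $A$ is abelian, each $a\in A$ acts $\mathbb{F}_p A$-linearly on $W$, giving a homomorphism $A\to E^{\times}$ whose image is therefore cyclic. Taking $B$ to be its kernel, $A/B$ is cyclic and $W\le C_V(B)$. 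Summing over the summands yields $V=\langle C_V(B): A/B \text{ cyclic}\rangle$, which closes the chain of reductions.

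The routine ingredients—existence of $A$-invariant Sylow subgroups, the Frattini argument, and Maschke's theorem—are all standard. The one genuinely load-bearing step, and the main obstacle to any elementwise attempt, is the irreducible module analysis: the cyclicity of $E^{\times}$ is exactly what converts the noncyclicity of $A$ into the existence of a proper $B$ with $A/B$ cyclic fixing $W$, and it is this that I expect to require the most care to phrase correctly.
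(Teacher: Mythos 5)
Your proposal is correct, but there is nothing in the paper to compare it against: Lemma \ref{76} is stated there without proof, as one of the ``well-known properties of coprime actions'' used without special reference (the citation to \cite{98} concerns only its extension to profinite groups). What you have written is a faithful reconstruction of the classical textbook argument (it is essentially Theorem 6.2.4 in Gorenstein's \emph{Finite Groups}): reduce to an $A$-invariant Sylow $p$-subgroup $P$, pass to $P/\Phi(P)$ via the coprime-action lemma (Lemma \ref{zizi}) and the non-generator property of $\Phi(P)$, split the resulting $\mathbb F_pA$-module by Maschke, and on each irreducible summand $W$ use Schur's lemma plus Wedderburn to see that $A$ maps into the cyclic group $\mathrm{End}_{\mathbb F_pA}(W)^{\times}$, so that the kernel $B$ of this map satisfies $A/B$ cyclic and $W\leq C_V(B)$. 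Each of these steps checks out, and you correctly identify that abelianness of $A$ is what makes every $a\in A$ act as an $\mathbb F_pA$-endomorphism of $W$. The one place where you lean on more than you acknowledge is the opening move: the existence of $A$-invariant Sylow $p$-subgroups under coprime action is not a triviality but a consequence of Glauberman's lemma (applicable here because $A$, being abelian, is solvable); it is standard and routinely bundled with Lemmas \ref{zizi} and \ref{76} in the literature, but it deserves an explicit citation rather than the bare phrase ``by coprimality such Sylow subgroups exist.'' With that reference supplied, your proof is complete.
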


The above results both easily extend to the case of coprime automorphisms of profinite groups (see for example \cite[Lemma 3.2]{98}).
Let $x,y$ be elements of a group, or a Lie algebra. We define inductively
$$[x,{}_0y]=x\,\, \text{and}\,\, [x,{}_n\,y]=[[x,{}_{n-1}y],y] \quad \text{for}\,\, n\geq 1.$$

Let $L$ be a Lie algebra. An element $a\in L$ is called ad-nilpotent if there exists a positive integer $n$ such that $[x,{}_n\,a]=0$ for all $x\in L$. Let $X\subseteq L$ be any subset of $L$. By a commutator in elements of $X$ we mean any element of $L$ that can be obtained as a Lie product of elements of $X$ with some system of brackets. The next theorem is due to Zelmanov (see \cite{ze3} or \cite{ze4}).

\begin{theorem}\label{21} Let $L$ be a Lie algebra generated by finitely many elements $a_1,a_2,\dots,a_m$ such that each commutator in these generators is ad-nilpotent. If $L$ satisfies a polynomial identity, then $L$ is nilpotent.
\end{theorem}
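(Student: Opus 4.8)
The plan is to obtain the theorem from Zelmanov's theory of sandwich algebras, which is essentially the only known route to a conclusion of this strength. Since $L$ is generated by the finite set $\{a_1,\dots,a_m\}$, nilpotency and local nilpotency coincide for $L$: a finite generating set spans a finitely generated subalgebra, namely $L$ itself, so once every finitely generated subalgebra is known to be nilpotent, $L$ is nilpotent. I would therefore aim at local nilpotency, which is the form best suited to an inductive treatment of subalgebras and quotients. As preliminary reductions I would extend scalars to an infinite (or algebraically closed) field --- harmless for the nilpotency question --- and linearize the polynomial identity, so that $L$ may be assumed to satisfy a multilinear identity of some degree $n$.

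The engine of the proof is the notion of a \emph{sandwich}: an element $c\in L$ satisfying $(\mathrm{ad}\,c)^2=0$ and $\mathrm{ad}\,c\,\mathrm{ad}\,x\,\mathrm{ad}\,c=0$ for every $x\in L$. Two facts from sandwich theory would serve as the backbone. The first, due to Kostrikin and Zelmanov, is that the sandwiches of an arbitrary Lie algebra generate a locally nilpotent ideal. The second, its quantitative companion, is that a Lie algebra generated by finitely many sandwiches is nilpotent, of class bounded in terms of the number of generators. Granting these, the entire problem is converted into the task of producing inside $L$ a sufficient supply of sandwiches.

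The crux --- and the step I expect to dominate the difficulty --- is exactly this production of sandwiches from the hypotheses, and it is here that the ad-nilpotency of \emph{every} commutator in the generators and the multilinear identity must be used together. (Ad-nilpotency of the generators alone is not inherited by their Lie products, which is why the hypothesis is imposed on all commutators; the identity, in turn, bounds the number of essentially distinct long commutators that can be nonzero.) I would argue by contradiction from a minimal non-locally-nilpotent configuration and run a double induction, on the degree $n$ of the identity and on the ad-nilpotency indices of the generators. Linearizing the identity and analysing the multilinear components of the free Lie algebra subject to both the identity and the Engel-type constraints, one shows that in such a counterexample sufficiently long commutators in the generators satisfy the two sandwich relations modulo terms already controlled by the induction. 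This forces a nonzero sandwich into the relevant ideal --- precisely the delicate combinatorial heart of Zelmanov's method.

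Finally I would assemble the pieces. A nonzero sandwich yields, by the first sandwich fact, a nonzero locally nilpotent ideal $I$; passing to $L/I$, the inductive hypothesis applies and the quotient is locally nilpotent, while the quantitative sandwich theorem controls the interaction of $I$ with finitely generated subalgebras. A standard layer-by-layer argument then propagates local nilpotency from $L/I$ and $I$ back to $L$, and finite generation upgrades this to nilpotency. The whole weight of the argument sits on the sandwich-construction step; the opening reductions and the closing assembly are comparatively routine once that machinery is available.
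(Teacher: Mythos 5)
This statement is not proved in the paper at all: it is quoted as a known deep theorem of Zelmanov (with references to \cite{ze3} and \cite{ze4}), precisely because its proof --- the machinery underlying the solution of the restricted Burnside problem --- is far too long to reproduce. Your proposal correctly identifies the strategy used in those sources (sandwich algebras, Kostrikin--Zelmanov local nilpotency, a reduction to producing sandwiches from the multilinear identity together with ad-nilpotency of all commutators in the generators), but it is an annotated roadmap rather than a proof: the step you yourself flag as the crux, namely showing that a minimal counterexample contains a nonzero sandwich, is asserted (``one shows that \dots sufficiently long commutators \dots satisfy the two sandwich relations modulo terms already controlled by the induction'') without any argument. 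That step is essentially the entire content of Zelmanov's work --- the divided-power/superalgebra combinatorics occupying most of \cite{ze1,ze2,ze4} --- so deferring it means the theorem has not been proved.

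Beyond the deferred core, two of your supporting assertions are incorrect or unjustified as stated. First, the Kostrikin--Zelmanov theorem says that a Lie algebra \emph{generated by} sandwiches is locally nilpotent; it does not say that the sandwiches of an arbitrary Lie algebra generate a locally nilpotent \emph{ideal}. Indeed the set of sandwiches is not ad-invariant: if $c$ is a sandwich, a direct computation using $(\mathrm{ad}\, c)^2=0$ and $\mathrm{ad}\, c\,\mathrm{ad}\, x\,\mathrm{ad}\, c=0$ gives $(\mathrm{ad}\,[c,x])^2=-\,\mathrm{ad}\, c\,(\mathrm{ad}\, x)^2\,\mathrm{ad}\, c$, which need not vanish, so $[c,x]$ need not be a sandwich and the subalgebra generated by all sandwiches need not be an ideal. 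Second, your closing ``layer-by-layer'' step --- propagating local nilpotency from $I$ and $L/I$ back to $L$ --- fails in general, since local nilpotency is not closed under extensions (for Lie algebras just as for groups); the correct device is the locally nilpotent radical of Hirsch--Plotkin type, combined with the fact that the quotient by it inherits the hypotheses, so that a sandwich found there contradicts triviality of its radical. Both repairs are standard, but they do not touch the main gap: without an actual construction of the sandwich, the proposal proves nothing that the citation to Zelmanov does not already assert.
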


An important criterion for a Lie algebra to satisfy a polynomial identity is the following theorem.

\begin{theorem}[Bahturin-Linchenko-Zaicev]\label{22} Assume that a finite 
group $A$ acts on a Lie algebra $L$ by automorphisms in such a manner that $C_L(A)$, the subalgebra formed by fixed elements, satisfies a polynomial identity. Assume further that the characteristic of the ground field is either 0 or prime to the order of $A$. Then $L$ satisfies a polynomial identity.
\end{theorem}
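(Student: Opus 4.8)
\section*{Proof proposal for Theorem~\ref{22}}

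The plan is to reduce to a question about group-graded Lie algebras and then to run a combinatorial argument that manufactures an identity of $L$ out of the given identity of $C_L(A)$. First I would enlarge the ground field: since a multilinear polynomial identity is inherited by and lifted from any field extension, and since $C_{L\otimes_k K}(A)=C_L(A)\otimes_k K$ satisfies the same identity as $C_L(A)$, I may assume that $k$ contains a primitive $e$-th root of unity, where $e=\exp(A)$. The essential case is $A$ abelian. There the action of $A$ diagonalizes, giving a grading $L=\bigoplus_{\chi\in\widehat A}L_\chi$ by the character group $\widehat A$, with $[L_\chi,L_\psi]\subseteq L_{\chi\psi}$ and neutral component $L_{\mathbf 1}=C_L(A)$. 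Thus the hypothesis becomes: \emph{the neutral component of a $\widehat A$-grading satisfies a polynomial identity}, and the goal is to produce one for $L$. For a general finite group $A$ the coprimality/characteristic hypothesis is exactly what makes $kA$ semisimple (Maschke), so that the averaging operator $\pi=\frac{1}{|A|}\sum_{a\in A}a$ is a projection of $L$ onto $C_L(A)$; the non-abelian case then fits the Bahturin--Linchenko framework for semisimple Hopf-algebra actions, of which the graded picture above is the prototype.

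Next I would carry out the core construction in the graded case. Let $f(x_1,\dots,x_d)=0$ be a multilinear identity of degree $d$ satisfied by $L_{\mathbf 1}$. Evaluating a multilinear Lie monomial on homogeneous elements $a_i\in L_{\chi_i}$ produces an element of degree $\chi_1\cdots\chi_d\in\widehat A$ (the product of the leaf-degrees, independent of the bracketing because $\widehat A$ is abelian); in particular such a value lies in $L_{\mathbf 1}=C_L(A)$ precisely when the product of its leaf-degrees is trivial. The idea is then to take a multilinear monomial of length $N$ much larger than $d\cdot|A|$, and to use a pigeonhole argument on the prefix degrees of its left-normed initial segments --- among more than $|A|$ prefixes in a group of order $|A|$ two must coincide --- to locate internal segments of trivial total degree. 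After rewriting the monomial by the Jacobi identity so that these segments become genuine sub-commutators, each such sub-commutator lands in $L_{\mathbf 1}$; arranging $d$ of them and substituting into $f$ forces the whole monomial to vanish. This yields a multilinear identity of $L$ of degree bounded in terms of $d$ and $|A|$.

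The main obstacle is precisely this last combinatorial step in the non-associative setting. Pigeonhole on prefix degrees exposes a sub-word of trivial degree, but because Lie monomials are bracketed this sub-word is not literally a sub-expression, so one must use the Jacobi identity to collect and re-bracket the monomial so that $d$ honest sub-commutators simultaneously fall into the neutral component and can be fed into $f$ --- all while keeping control of the resulting degree. This re-bracketing and the attendant degree blow-up are the delicate heart of the Bahturin--Zaicev argument, and are where Zelmanov-type Lie combinatorics enter. By contrast, the reductions --- field extension and, for non-abelian $A$, the passage to the semisimple group algebra $kA$ with its averaging projection onto $C_L(A)$ --- are comparatively routine once the graded case is in hand, the coprime/characteristic hypothesis being used exactly to guarantee semisimplicity and the existence of the projection $\pi$.
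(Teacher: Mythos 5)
First, a point of reference: the paper itself does not prove Theorem~\ref{22} at all --- it quotes it as an external result, attributing the soluble case to Bahturin--Zaicev \cite{bz} and the general case to Linchenko \cite{li}, and remarks that only the abelian case is needed. So your proposal can only be judged on its own merits, and on those merits it is an outline with the decisive step missing. Your reductions are fine and standard: extending the ground field (multilinear identities pass to and descend from scalar extensions, and fixed points commute with base change), diagonalizing the action of an abelian $A$ to obtain a $\widehat A$-grading with $L_{\mathbf 1}=C_L(A)$, and noting that coprimality is exactly what makes the averaging operator available and $kA$ semisimple. Deferring the non-abelian case to the Hopf-algebra framework is citation rather than proof, but since the paper explicitly needs only abelian $A$, that is forgivable here.

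The genuine gap is in the combinatorial core, and it is not only unexecuted but misstated. You say that after pigeonholing on prefix degrees and rebracketing via Jacobi, one arranges $d$ sub-commutators with values in $L_{\mathbf 1}$ and that ``substituting into $f$ forces the whole monomial to vanish.'' That is a non sequitur: a multilinear identity $f\equiv 0$ of $L_{\mathbf 1}$ does not annihilate a monomial whose segments lie in $L_{\mathbf 1}$; it only rewrites one such evaluation as a linear combination of the $(d!-1)$ evaluations obtained by permuting those segments. No individual monomial is shown to be zero, and no identity of $L$ follows from your argument as stated. The actual Bahturin--Zaicev argument uses precisely this rewriting in a counting mode: one shows that, modulo the consequences of $f$ and the grading, the multilinear evaluations of degree $n$ are spanned by a family of monomial types whose number grows more slowly than $\dim$ of the multilinear component of the free Lie algebra (which is $(n-1)!$), so that for large $n$ a nontrivial multilinear dependence --- i.e.\ an identity of $L$ --- must exist. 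That dimension-count, together with the genuinely delicate control of rebracketing in the non-associative setting (which you correctly flag as ``the delicate heart'' but do not carry out), is the proof; without it your text is a plausible roadmap to \cite{bz}, not a proof of the theorem.
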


The above theorem was first proved by Bahturin and Zaicev in the case where $A$ is soluble  \cite{bz} and later extended by Linchenko to the general case \cite{li}. In the present paper we only require the case where $A$ is abelian.

Let $G$ be a (profinite) group. A series of subgroups $$G=G_1\geq G_2\geq\dots\eqno{(*)}$$ is called an $N$-series if it satisfies $[G_i,G_j]\leq G_{i+j}$ for all $i,j\geq 1$. Here and throughout the paper when dealing with a profinite group we consider only closed subgroups. Obviously any $N$-series is central, i.e. $G_i/G_{i+1}\leq Z(G/G_{i+1})$ for any $i$.  Let $p$ be a prime. An $N$-series is called $N_p$-series if $G_i^p\leq G_{pi}$ for all $i$. Given an $N$-series $(*)$, let $L^*(G)$ be the direct sum of the abelian groups $L_i^*=G_i/G_{i+1}$, written additively. Commutation in $G$ induces a binary operation $[,]$ in $L$. For homogeneous elements $xG_{i+1}\in L_i^*,yG_{j+1}\in L_j^*$ the operation is defined by $$[xG_{i+1},yG_{j+1}]=[x,y]G_{i+j+1}\in L_{i+j}^*$$ and extended to arbitrary elements of $L^*(G)$ by linearity. It is easy to check that the operation is well-defined and that $L^*(G)$ with the operations $+$ and $[,]$ is a Lie ring. If all quotients $G_i/G_{i+1}$ of an $N$-series $(*)$ have prime exponent $p$ then $L^*(G)$ can be viewed as a Lie algebra over $\mathbb F_p$, the field with $p$ elements. In the important  case where the series $(*)$ is the $p$-dimension central series (also known under the name of Zassenhaus-Jennings-Lazard series) of $G$ we write $L_p(G)$ for the subalgebra generated by the first homogeneous component $G_1/G_2$ in the associated Lie algebra over the field with $p$ elements. Observe that the  $p$-dimension central series is an $N_p$-series (see \cite[p.\ 250]{hube} for details).

Any automorphism of $G$ in the natural way induces an automorphism of $L^*(G)$. If $G$ is profinite and $\alpha$ is a coprime automorphism of $G$, then the subring (subalgebra) of fixed points of $\alpha$ in $L^*(G)$ is isomorphic with the Lie ring associated to the group $C_G(\alpha)$ via the series formed by intersections of $C_G(\alpha)$ with the terms of the series $(*)$ (see \cite{aaaa} for more details).

Let $w=w(x_1,x_2,\dots,x_k)$ be a group-word. Let $H$ be a subgroup of a group $G$ and $g_1,g_2,\dots,g_k\in G$. We say that the law $w\equiv1$ is satisfied on the cosets $g_1H,g_2H,\dots,g_kH$ if $w(g_1h_1,g_2h_2,\dots,g_kh_k)=1$ for all $h_1,h_2,\dots,h_k\in H$. Wilson and Zelmanov showed in \cite{wize} that if a profinite group $G$ has an open subgroup $H$ and elements $g_1,g_2,\dots,g_k$ such that the law $w\equiv1$ is satisfied on the cosets $g_1H,g_2H,\dots,g_kH$, then $L_p(G)$ satisfies a polynomial identity for each prime $p$. More precisely, the proof in \cite{wize} shows that whenever a profinite group $G$ has an open subgroup $H$ and elements $g_1,g_2,\dots,g_k$ such that the law $w\equiv1$ is satisfied on the cosets $g_1H,g_2H,\dots,g_kH$, the Lie algebra $L^*(G)$ satisfies a multilinear polynomial identity for any prime $p$ and any $N_p$-series $(*)$ in $G$.

\begin{lemma} \label{remlocf} For any locally nilpotent profinite group $G$ there exist a positive integer $n$, elements $g_1,g_2\in G$ and an open subgroup $H\leq G$ such that the law $[x,{}_n\,y]\equiv1$ is satisfied on the cosets $g_1H,g_2H$. 
\end{lemma}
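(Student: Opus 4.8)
The plan is to exploit compactness of $G$ together with the hypothesis of local nilpotency, via the Baire category theorem. For each positive integer $n$ set
$$E_n=\{(x,y)\in G\times G : [x,{}_n\,y]=1\}.$$
First I would record three properties of these sets. Since $(x,y)\mapsto[x,{}_n\,y]$ is a continuous word map on the profinite group $G$ and $\{1\}$ is closed, each $E_n$ is a closed subset of $G\times G$. The identity $[x,{}_{n+1}\,y]=[[x,{}_n\,y],y]$ shows that $E_n\subseteq E_{n+1}$, so the family is nested. Finally, for any pair $(x,y)$ the closed subgroup $\overline{\langle x,y\rangle}$ is topologically $2$-generated, hence nilpotent because $G$ is locally nilpotent; if $c$ denotes its class then $[x,{}_c\,y]\in\gamma_{c+1}(\overline{\langle x,y\rangle})=1$, so $(x,y)\in E_c$. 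Consequently $\bigcup_{n\geq1}E_n=G\times G$.

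Next I would invoke topology. As $G$ is profinite, $G\times G$ is compact and Hausdorff, and therefore a Baire space; thus it cannot be a countable union of closed sets all of which have empty interior. Since the nested sequence $(E_n)$ covers $G\times G$, some $E_N$ must have nonempty interior. In a profinite group the cosets of open subgroups form a basis of the topology, so the interior of $E_N$ contains a basic open set of $G\times G$, that is, there are elements $g_1,g_2\in G$ and open subgroups $H_1,H_2\leq G$ with $g_1H_1\times g_2H_2\subseteq E_N$. Putting $H=H_1\cap H_2$, which is again open, I obtain $g_1H\times g_2H\subseteq E_N$.

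By the definition of $E_N$ this means precisely that $[g_1h_1,{}_N\,g_2h_2]=1$ for all $h_1,h_2\in H$; in other words the law $[x,{}_N\,y]\equiv1$ is satisfied on the cosets $g_1H,g_2H$, with $n=N$, as required. The only genuinely delicate point is recognising that compactness is the right engine: local nilpotency alone gives, for each individual pair, an Engel relation whose length may be unbounded, and it is precisely the Baire category theorem on the compact space $G\times G$ that converts this pointwise information into a single uniform Engel law holding on an entire pair of cosets. The remaining work—verifying continuity, nestedness, and that basic open sets are cosets of open subgroups—is routine.
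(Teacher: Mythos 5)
Your proposal is correct and follows essentially the same route as the paper: the sets $E_n$ are exactly the paper's sets $S_i$, and the argument---closedness of these sets, covering of $G\times G$ via local nilpotency, the Baire category theorem on the compact space $G\times G$, and extraction of a product of cosets of an open subgroup from the nonempty interior---is the paper's proof, with the coset-extraction step spelled out in slightly more detail. No gaps; the only cosmetic remark is that one can work with the abstract subgroup $\langle x,y\rangle$ directly, so the detour through its closure is not needed.
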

\begin{proof} Since any finitely generated subgroup of $G$ is nilpotent, for every pair of elements $g_1,g_2$ there exists a positive number $j$ such that $[g_1,{}_jg_2]=1$. For each integer $i$ we set $$S_i=\{(x,y)\in G\times G:[x,{}_i\,y]=1\}.$$ Since the sets $S_i$ are closed in $G\times G$ and have union $G\times G$, by Baire category theorem \cite[p.\ 200]{ke} at least one of these sets has a non-empty interior. Therefore we can find an open subgroup $H$ in $G$, elements $g_1,g_2\in G$ and an integer $n$ with the required property.
\end{proof}
The following proposition is now straightforward.

\begin{proposition}\label{34} Assume that a finite group $A$ acts coprimely on a profinite group $G$ in such a manner that $C_G(A)$ is locally nilpotent. Then for each prime $p$ the Lie algebra $L_p(G)$ satisfies a multilinear polynomial identity.
\end{proposition}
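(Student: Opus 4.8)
The plan is to assemble the three cited black boxes — Lemma \ref{remlocf}, the Wilson--Zelmanov theorem on laws holding on cosets \cite{wize}, and the Bahturin--Linchenko--Zaicev theorem (Theorem \ref{22}) — by means of the fixed-point correspondence for associated Lie algebras recalled above. Fix a prime $p$ and let $(*)$ be the $p$-dimension central series of $G$. This is an $N_p$-series whose quotients are elementary abelian $p$-groups, and the associated Lie algebra $L^*(G)$ over $\mathbb F_p$ contains $L_p(G)$ as the subalgebra generated by its first homogeneous component. Since $(*)$ is characteristic, the action of $A$ on $G$ induces an action of $A$ on $L^*(G)$ by automorphisms, and $L_p(G)$ is $A$-invariant. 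Because the action of $A$ on $G$ is coprime, the prime $p$ does not divide $|A|$, so $\mathbb F_p$ has characteristic prime to $|A|$ — precisely the hypothesis of Theorem \ref{22}.

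Next I would identify the fixed points $C_{L^*(G)}(A)$. Applying the fixed-point correspondence to the group $A$, this subalgebra is isomorphic to the Lie algebra associated to the closed subgroup $C_G(A)$ via the series obtained by intersecting $(*)$ with $C_G(A)$. One checks at once that this intersection series is again an $N_p$-series of $C_G(A)$, since intersecting with a subgroup preserves both $[G_i,G_j]\le G_{i+j}$ and $G_i^p\le G_{pi}$, and that its quotients embed into those of $(*)$ and hence again have exponent $p$, so the associated algebra is again an $\mathbb F_p$-algebra. Now $C_G(A)$ is profinite and, by hypothesis, locally nilpotent, so Lemma \ref{remlocf} supplies an open subgroup $H\le C_G(A)$, elements $g_1,g_2$ and an integer $n$ such that $[x,{}_n\,y]\equiv1$ holds on the cosets $g_1H,g_2H$. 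By the Wilson--Zelmanov theorem the Lie algebra associated to $C_G(A)$ via any $N_p$-series — in particular via the intersection series above — satisfies a multilinear polynomial identity; therefore $C_{L^*(G)}(A)$ satisfies a multilinear polynomial identity.

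Finally I would feed this into Theorem \ref{22}: the group $A$ acts on the $\mathbb F_p$-algebra $L^*(G)$, the characteristic $p$ is prime to $|A|$, and $C_{L^*(G)}(A)$ satisfies a polynomial identity; hence $L^*(G)$ satisfies a polynomial identity, and so does its subalgebra $L_p(G)$. Since all of the deep work is contained in the quoted results, no step is genuinely hard, and the main obstacle is purely one of alignment: one must confirm that the series underlying the fixed-point correspondence is exactly an $N_p$-series with exponent-$p$ quotients to which Wilson--Zelmanov applies, and extend the correspondence from a single coprime automorphism, as stated in the excerpt, to the whole abelian group $A$. Once these bookkeeping points are settled, the conclusion is immediate, in keeping with the remark that the proposition is \emph{straightforward}.
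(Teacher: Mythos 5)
Your proposal is correct and follows essentially the same route as the paper: Theorem \ref{22} reduces the problem to showing that the fixed-point subalgebra satisfies a polynomial identity, which is then obtained by identifying it (via the intersection of the $p$-dimension central series with $C_G(A)$) with the Lie algebra of the locally nilpotent group $C_G(A)$ and applying Lemma \ref{remlocf} together with the Wilson--Zelmanov theorem. The only cosmetic difference is that you apply Theorem \ref{22} to the full algebra $L^*(G)$ and then pass to the subalgebra $L_p(G)$, whereas the paper applies it directly to $L_p(G)$; both are valid.
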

\begin{proof} Let $L=L_p(G)$. In view of Theorem \ref{22} it is sufficient to show that $C_L(A)$ satisfies a polynomial identity. We know that $C_L(A)$ is isomorphic with the Lie algebra associated with the central series of $C_G(A)$ obtained by intersecting $C_G(A)$ with the $p$-dimension central series of $G$. Since $C_G(A)$ is locally nilpotent, Lemma \ref{remlocf} applies. Thus, the Wilson-Zelmanov result \cite[Theorem 1]{wize} tells us that $C_L(A)$ satisfies a polynomial identity.
\end{proof}

We will also require the following lemma that essentially is due to Wilson and Zelmanov (cf \cite[Lemma in Section 3]{wize}.
\begin{lemma}\label{77} Let $G$ be a profinite group and $g\in G$ an element such that for any $x\in G$ there exists a positive $n$ with the property that $[x,{}_n\,g]=1$. Let $L^*(G)$ be the Lie algebra associated with $G$ using an $N_p$-series $(*)$ for some prime $p$. Then the image of $g$ in $L^*(G)$ is ad-nilpotent.
\end{lemma}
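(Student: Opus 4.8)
The plan is to upgrade the pointwise condition on $g$ to a uniform one by a compactness argument, in the spirit of the proof of Lemma~\ref{remlocf}, and then to feed this uniform condition into the commutator-collection machinery of Wilson and Zelmanov \cite[Section~3]{wize} in order to read off ad-nilpotency inside the associated Lie algebra. First I would set, for each positive integer $i$, the set $$X_i=\{x\in G:[x,{}_i\,g]=1\}.$$ Because the map $x\mapsto[x,{}_i\,g]$ is continuous and $G$ is compact, each $X_i$ is closed; moreover $X_i\subseteq X_{i+1}$ and, by the hypothesis on $g$, their union is the whole of $G$. The Baire category theorem \cite[p.\ 200]{ke} then provides an index $m$ for which $X_m$ has non-empty interior, and since $G$ is profinite this means that $X_m$ contains a coset $aV$ of some open normal subgroup $V$. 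Taking the identity in $V$ shows that $a\in X_m$ as well, so the uniform laws $[a,{}_m\,g]=1$ and $[av,{}_m\,g]=1$ for all $v\in V$ are now at our disposal.

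The next step is to translate ad-nilpotency into a condition on the $N_p$-series $(*)$. Let $s$ be the degree of the image $\bar g$ of $g$, so that $g\in G_s\setminus G_{s+1}$ (in the typical case $s=1$). For a homogeneous element $\bar x$ with $x\in G_i$ one checks directly from the definition of the bracket in $L^*(G)$ that $[\bar x,{}_n\,\bar g]$ equals the image of $[x,{}_n\,g]$ in $L_{i+ns}^*$; in particular $[\bar x,{}_n\,\bar g]=0$ if and only if $[x,{}_n\,g]\in G_{i+ns+1}$. Consequently the image of $g$ is ad-nilpotent exactly when there is a single integer $N$, independent of the degree, with $[x,{}_N\,g]\in G_{i+Ns+1}$ for all $i$ and all $x\in G_i$. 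In other words, the pointwise Engel condition must be promoted to a degree-uniform approximate Engel condition relative to the filtration.

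The heart of the matter, and the step I expect to be the main obstacle, is precisely this promotion, which is the content of the lemma of \cite[Section~3]{wize}. The natural attempt is to expand $[av,{}_m\,g]=1$ by the standard commutator identities and to isolate the term $[v,{}_m\,g]$; the difficulty is that this expansion spawns numerous cross terms, such as $[[a,g],v]$, whose degrees are comparable with that of the principal term, so that a crude leading-term estimate collapses. The Wilson-Zelmanov argument circumvents this by exploiting that the series $(*)$ is central and satisfies $G_i^p\le G_{pi}$, which allows one to linearise the relation over $\mathbb F_p$ and to bound all the contributions simultaneously, thereby producing an integer $N$ and a threshold $d$ for which $[\bar x,{}_N\,\bar g]=0$ holds for every homogeneous element $\bar x$ of degree at least $d$. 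Finally I would dispose of the remaining low degrees by the elementary observation that $\mathrm{ad}\,\bar g$ raises degree by $s$: given $\bar x$ of degree $i$ below $d$, the element $[\bar x,{}_k\,\bar g]$ with $k=\lceil(d-i)/s\rceil$ is homogeneous of degree at least $d$, so the high-degree bound applies to it and yields $[\bar x,{}_{N+k}\,\bar g]=0$. Since $k$ depends only on $i$ and $i$ ranges over the finitely many values below $d$, taking the largest exponent that occurs gives one bound valid for all homogeneous, and hence by linearity all, elements of $L^*(G)$, which is the required ad-nilpotency of $\bar g$.
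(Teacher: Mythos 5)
The paper gives no proof of this lemma at all --- it is quoted as ``essentially due to Wilson and Zelmanov,'' where ``essentially'' covers exactly the Baire-category reduction from the pointwise Engel hypothesis to a uniform law $[av,{}_m\,g]=1$ on a coset of an open subgroup, which is the bridge you build (in the same way as the paper's Lemma \ref{remlocf}) before delegating the combinatorial core to \cite[Lemma in Section 3]{wize}. Your proposal is therefore correct and follows essentially the same route as the paper: the surrounding bookkeeping (the equivalence of ad-nilpotency with a degree-uniform condition along the filtration, and the disposal of low degrees using that $\mathrm{ad}\,\bar g$ raises degree) is sound, and the one step you leave unproved --- the passage from the coset law to ad-nilpotency relative to an $N_p$-series --- is precisely the step the paper also leaves to the citation.
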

Finally, we quote a useful lemma from \cite{khushu}.

\begin{lemma}\label{khuz}
Let $L$ be a Lie algebra  and $H$ a subalgebra of $L$ generated by $m$ elements $h_1,\ldots,h_m$ such that all commutators in the generators $h_i$ are ad-nilpotent in $L$. If $H$ is nilpotent, then we have $[L,\underbrace{H,\ldots,H}_d]=0$ for some number $d$.
\end{lemma}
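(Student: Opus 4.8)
The content of the lemma is that the finitely generated nilpotent subalgebra $H$ acts nilpotently on all of $L$ through the adjoint representation, with a single exponent $d$ that is independent of which element of $H$ is inserted at each step. The naive hope --- that a linear combination of ad-nilpotent elements is again ad-nilpotent --- is false, so the plan is to transfer the problem to the associative algebra of operators generated by $\mathrm{ad}(H)$ and to prove that this algebra is nilpotent. Writing $M=\mathrm{ad}(H)\subseteq\mathrm{End}(L)$ for the (nilpotent) Lie algebra of operators generated by $\mathrm{ad}(h_1),\ldots,\mathrm{ad}(h_m)$, and $\mathcal A$ for the non-unital associative subalgebra of $\mathrm{End}(L)$ they generate, one has $M\subseteq\mathcal A$ and hence $[L,\underbrace{H,\ldots,H}_d]\subseteq L\,\mathcal A^{d}$. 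Thus it suffices to find $d$ with $\mathcal A^{d}=0$.

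First I would fix the combinatorial data. Since $H$ is nilpotent of class $c$ and generated by $m$ elements, it is spanned by the finitely many commutators in $h_1,\ldots,h_m$ of weight at most $c$; all of these are ad-nilpotent by hypothesis, so I may choose a single integer $n$ with $v^{n}=0$ (as an operator on $L$) for the image $v$ of every such commutator. Next I would select a basis $b_1,\ldots,b_r$ of $M$ that (i) consists of images of commutators in the $h_i$ and (ii) is adapted to the lower central series $M=M_1\supseteq M_2\supseteq\cdots$, in the sense that $b_j\in M_{w_j}\setminus M_{w_j+1}$ for a well-defined weight $w_j\le c$. Property (i) is available because $\gamma_i(M)$ is spanned modulo $\gamma_{i+1}(M)$ by commutators of length $i$, and it is essential: it guarantees that each individual basis element $b_j$ is a single commutator and therefore satisfies $b_j^{\,n}=0$.

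The heart of the argument is a collection (Poincar\'e--Birkhoff--Witt) computation. I would take an arbitrary product of $s$ generators $\mathrm{ad}(h_{i_1})\cdots\mathrm{ad}(h_{i_s})$ and rewrite it inside $\mathcal A$ as a linear combination of ordered monomials $b_1^{e_1}\cdots b_r^{e_r}$, using repeatedly the rule $bb'=b'b+[b,b']$. The point is to track the total weight $\sum_j e_j w_j$: since $[b,b']\in M_{w(b)+w(b')}$ is a combination of basis elements of weight at least $w(b)+w(b')$, each elementary swap leaves the total weight unchanged or raises it, so --- as every generator has weight $1$ --- every ordered monomial occurring in the rewritten product has total weight at least $s$. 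On the other hand, a nonzero ordered monomial must have each exponent $e_j\le n-1$, since the literal block $b_j^{e_j}$ is a genuine operator power and vanishes as soon as $e_j\ge n$; consequently its total weight is at most $(n-1)\sum_j w_j\le (n-1)rc$. Comparing the two bounds, any product of $s>(n-1)rc$ generators is zero, so $\mathcal A^{d}=0$ with $d=(n-1)rc+1$, and this $d$ answers the lemma.

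The step I expect to require the most care is this collection argument over a field of arbitrary characteristic: one must be sure that the rewriting terminates in ordered monomials, that passing the commutator corrections through the weight filtration really is weight-nondecreasing, and --- most importantly --- that the powers $b_j^{e_j}$ appearing in an ordered monomial are honest iterated products of the operator $b_j$, so that ad-nilpotency of the single commutator $b_j$ can be invoked. The choice of a filtration-adapted basis lying inside the set of commutators is exactly what secures this last point and is what rescues the argument from the failure of ``a sum of nilpotents is nilpotent''.
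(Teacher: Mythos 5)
Your proof is correct, and since the paper does not prove Lemma \ref{khuz} at all---it is quoted from \cite{khushu}---the right comparison is with that source, where the argument is essentially the collection process you describe: a spanning set of $H$ consisting of commutators in the $h_i$ (hence admitting a uniform ad-nilpotency index $n$), ordered compatibly with the lower-central-series weights, and a weight-nondecreasing straightening of products of adjoint operators into ordered monomials, which forces any product of more than $(n-1)rc$ factors to vanish. The one step you flag as delicate, termination of the rewriting, is routine: each elementary move either decreases the number of factors or keeps it fixed while decreasing the number of inversions, so induction on this lexicographically ordered pair finishes it.
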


\section{Proof}

As usual, for a profinite group $G$ we denote by $\pi(G)$ the set of prime divisors of the orders of finite continuous homomorphic images of $G$. We say that $G$ is a $\pi$-group if $\pi(G)\subseteq\pi$ and $G$ is a $\pi'$-group if $\pi(G)\cap\pi=\emptyset$. If $m$ is an integer, we denote by $\pi(m)$ the set of prime divisors of $m$. If $\pi$ is a set of primes, we denote by $O_\pi(G)$ the maximal normal $\pi$-subgroup of $G$ and by $O_{\pi'}(G)$ the maximal normal $\pi'$-subgroup.

We are ready to embark on the proof of Theorem \ref{main}. 

\begin{proof}[Proof of Theorem \ref{main}]
Recall that $q$ is a prime and $A$ an elementary abelian group of order $q^3$ acting coprimely on a profinite group $G$ in such a manner that $C_G(a)$ is locally nilpotent for all $a\in A^\#$. We wish to show that $G$ is locally nilpotent. In view of Ward's Theorem \ref{wa} the group $G$ is pronilpotent and therefore $G$ is the Cartesian product of its Sylow subgroups.

Choose $a\in A^\#$.  By Lemma \ref{remlocf}  $C_G(a)$ contains an open subgroup $H$ and elements $u,v$ such that for some $n$ the law $[x,{}_n\,y]\equiv1$ is satisfied on the cosets $uH,vH$. Let $[C_{G}(a):H]=m$ and let $\pi_1=\pi(m)$. Denote $O_{\pi_1'}(C_G(a))$ by $T$. Since  $T$ is isomorphic to the image of $H$ in $C_G(a)/O_{\pi_1}(C_G(a))$, it is easy to see that $T$ satisfies the law $[x,{}_n\,y]\equiv1$, that is, $T$ is $n$-Engel. By the result of Burns and Medvedev \cite{bm} the subgroup $T$ has a nilpotent normal subgroup $U$ such that $T/U$ has finite exponent, say $e$. Set $\pi_2=\pi(e)$. Of course, the sets $\pi_1$ and $\pi_2$ depend on the choice of $a\in A^\#$ so strictly speaking they should be denoted by $\pi_1(a)$ and $\pi_2(a)$. For each such choice let $\pi_a=\pi_1(a)\cup\pi_2(a)$. 

We repeat this argument for every $a\in A^\#$. Set $\pi=\cup_{a\in A^\#}\pi_a$ and $K=O_{\pi'}(G)$. Since all sets $\pi_1(a)$ and $\pi_2(a)$ are finite, so is $\pi$. The choice of the set $\pi$ guarantees that $C_K(a)$ is nilpotent for every $a\in A^\#$. Thus, by Theorem \ref{2001}, the subgroup $K$ is nilpotent. Let $p_1,p_2,\dots,p_r$ be the finitely many primes in $\pi$ and let $P_1,P_2,\dots,P_r$ be the corresponding Sylow subgroups of $G$. Then $G=P_1\times P_2\times\dots\times P_r\times K$ and therefore it is sufficient to show that each subgroup $P_i$ is locally nilpotent. Thus, from now on without loss of generality we assume that $G$ is a pro-$p$ group for some prime $p$. Since every finite subset of $G$ is contained in a finitely generated $A$-invariant subgroup, we can further assume that $G$ is finitely generated.

Let $A_1,A_2,\dots,A_s$ be the distinct maximal subgroups of $A$. We denote by $D_j=D_j(G)$ the terms of the $p$-dimension central series of $G$. Set $L=L_p(G)$ and $L_j=L\cap (D_j/D_{j+1})$, so that $L=\oplus L_j$. The group $A$ naturally acts on $L$. Since each subgroup $A_i$ is noncyclic, by Lemma \ref{76} we have $L=\sum_{a\in A_i^\#}C_L(a)$ for every $i\leq s$.

Let $L_{ij}=C_{L_j}(A_i)$. Again by Lemma \ref{76}, for any $j$ we have $$L_j=\sum\limits_{1\leq i\leq s}L_{ij}.$$ In view of Lemma \ref{zizi} for any $l\in L_{ij}$ there exists $x\in D_j\cap C_G(A_i)$ such that $l=xD_{j+1}$. Therefore, by Lemma \ref{77}, the element $l$ is ad-nilpotent in $C_L(a)$ for every $a\in A_i^\#$. Since $L=\sum_{a\in A_i^\#}C_L(a)$, we conclude that  $$\mbox{any element $l$ in } L_{ij} \mbox{ is ad-nilpotent in } L. \eqno (**)$$

Let $\omega$ be a primitive $q$th root of unity and $\overline L=L\otimes \mathbb F_p[\omega]$. We can view $\overline L$ both as a Lie algebra over $\mathbb F_p$ and that over $\mathbb F_p[\omega]$. It is natural to identify $L$ with the $\mathbb F_p$-sub\-algebra $L\otimes 1$ of $\overline L$. We note that if an element $x\in L$ is ad-nilpotent of index $r$, say, then the ``same'' element $x\otimes 1$ is ad-nilpotent in $\overline L$ of the same index $r$. Put $\overline{L_j}=L_j\otimes\mathbb F_p[\omega]$; then $\overline L=\left<\overline{L_1}\right>$, since $L=\left<L_1\right>$, and $\overline L$ is the direct sum of the homogeneous components $\overline{L_j}$.

The group $A$ acts naturally on $\overline L$, and  we have $\overline{L_{ij}}=C_{\overline{L_j}}(A_i)$, where $\overline{L_{ij}}=L_{ij}\otimes\mathbb F_p[\omega]$. Let us show that $$\mbox{any element } y\in\overline{L_{ij}} \mbox{ is ad-nilpotent in $\overline L$.}\eqno (***)$$

Since $\overline{L_{ij}}=L_{ij}\otimes\mathbb F_p[\omega]$, we can write $$y=x_0+\omega x_1+\omega ^2x_2+ \dots +\omega ^{q-2}x_{q-2}$$ for some $x_0,x_1,x_2,\dots,x_{q-2}\in L_{ij}$, so that each of the summands $\omega^tx_t$ is ad-nilpotent by {(**)}. Set $J=\langle x_0,\omega x_1,\dots,\omega^{q-2}x_{q-2}\rangle$. This is the subalgebra generated by $x_0,\omega x_1,\dots,\omega^{q-2}x_{q-2}$. Note that $J\subseteq C_{\overline L}(A_i)$. A commutator of weight $k$ in the elements $\omega^{t}x_t$ has the form $\omega^{s}x$ for some $x$ that belongs to $L_{im}$, where $m=kj$. By {(**)} the element $x$ is ad-nilpotent and so such a commutator must be ad-nilpotent.

Proposition \ref{34} tells us that the Lie algebra $L$ satisfies a multilinear polynomial identity. The multilinear identity is also satisfied in $\overline L$ and so it is satisfied in $J$. Hence by Theorem \ref{21} $J$ is nilpotent. Lemma \ref{khuz} now says that $[\overline L,\underbrace{J,\dots,J}_{d}]=0$ for some $d$. This establishes (***). 

Since $A$ is abelian and the ground field is now a splitting field for $A$, every $\overline{L_j}$ decomposes in the direct sum of common eigenspaces for $A$. In particular, $\overline{L_1}$ is spanned by finitely many common eigenvectors for $A$. Hence $\overline L$ is generated by finitely many common eigenvectors for $A$ from $\overline{L_1}$. Every common eigenspace is contained in the centralizer $C_{\overline L}(A_i)$ for some $i\leq s$, since $A$ is of order $q^3$. We also note that any commutator in common eigenvectors is again a common eigenvector. Thus, if $l_1,\dots,l_{r}\in\overline{ L_1}$ are common eigenvectors for $A$ generating $\overline L$ then any commutator in these generators belongs to some $\overline{L_{ij}}$ and therefore, by (***), is ad-nilpotent.

As we have seen,  $\overline L$ satisfies a polynomial identity. It follows from Theorem \ref{21} that $\overline L$ is nilpotent. We now deduce that $L$ is nilpotent as well.

According to Lazard \cite{la} the nilpotency of $L$ is equivalent to $G$ being $p$-adic analytic. The Lubotzky-Mann theory \cite{luma} now tells us that $G$ is of finite rank, that is, all closed subgroups of $G$ are finitely generated. In particular, we conclude that $C_G(a)$ is finitely generated for every $a\in A^\#$. It follows that the centralizers $C_G(a)$ are nilpotent. Theorem \ref{2001} now tells us that $G$ is nilpotent. The proof is complete.
\end{proof}


\begin{thebibliography}{99}

\bibitem{bz} 
Y.\,A. Bahturin, M.\,V. Zaicev,  Identities of graded algebras, \textit{J. Algebra} \textbf{205} (1998),  1--12. 

\vspace{0.1cm}

\bibitem{bm} R.\,G. Burns, Y. Medvedev,  A note on Engel groups and local nilpotence, {\it J. Austral Math. Soc. Ser. A }{\bf 64} (1998), no. 1, 92--100.

\vspace{0.1cm}

\bibitem{hube} B. Huppert, N. Blackburn, \textit{Finite groups II}. Springer Verlang, Berlin, 1982.

\vspace{0.1cm}

\bibitem{ke} J.\,L. Kelley, \textit{General Topology}, Van Nostrand, Toronto - New York - London, 1955.

\vspace{0.1cm}

\bibitem{khushu} E.\,I. Khukhro, P. Shumyatsky, \ Bounding the exponent of a finite group with automorphisms, {\it J. Algebra} \textbf {212} (1999), 363--374. 

\vspace{0.1cm}

\bibitem{la} M. Lazard, Groupes Analytiques $p$-Adiques, {\it Publ. Math. Inst. Hautes \'Etudes Sci.} \textbf{26}, Paris, (1965), 389--603. 

\vspace{0.1cm}

\bibitem{li} 
V. Linchenko, Identities of Lie algebras with actions of Hopf algebras,  {\it Comm. Algebra} \textbf {25} (1997), 3179--3187. 

\vspace{0.1cm}
\bibitem{luma}
 A. Lubotzky, A. Mann, Powerful $p$-groups. I,II, {\it J. Algebra} \textbf {105}  (1987), 484--515. 
 
\vspace{0.1cm}

\bibitem{98} P. Shumyatsky, Centralizers in groups with finiteness 
conditions, {\it J. Group Theory} {\bf 1} (1998), 275--282.

\vspace{0.1cm}

\bibitem{aaaa} P. Shumyatsky, Applications of Lie ring methods to group theory, in {\it Nonassociative Algebra and Its Applications}, (Eds R. Costa et al.), Marcel Dekker, New York, (2000), 373--395. 


\vspace{0.1cm}

\bibitem{eu2001} P. Shumyatsky, Finite groups and the fixed points of coprime automorphisms, {\it Proc. Amer. Math. Soc.} \textbf{129} (2001), 3479--3484. 

\vspace{0.1cm}

\bibitem{ward} J.\,N. Ward, On finite groups admitting automorphisms with nilpotent fixed-point group, {\it Bull. Aust. Math. Soc.} {\bf 5} (1971), 281--–282.
\vspace{0.1cm}

\bibitem{wize} J.\,S. Wilson, E. Zelmanov, Identities for Lie algebras of pro-$p$ groups, {\it J. Pure Appl. Algebra} {\bf 81} (1992), 103--109.

\bibitem{ze1} E. Zelmanov, A solution of the Restricted Burnside Problem for groups of odd exponent, {\it Math. USSR Izvestiya} {\bf 36} (1991), 41--60.

\bibitem{ze2} E. Zelmanov, A solution of the Restricted Burnside Problem for 2-groups, {\it Math. USSR Sbornik} {\bf 72} (1992), 543--565.


\bibitem{ze3} E. Zelmanov, Lie methods in the theory of nilpotent groups,  in {\it Groups '93 Galaway/ St Andrews}, Cambridge University Press, Cambridge,  (1995), 567--585.
\vspace{0.1cm}

\bibitem{ze4} E. Zelmanov,  {\it Nil Rings and Periodic Groups},  The Korean Math. Soc. Lecture Notes in Math., Seoul, (1992).

\end{thebibliography}
\end{document}